\newcommand{\bbC}{\mathbb{C}}
\newcommand{\bbN}{\mathbb{N}}
\newcommand{\bbR}{\mathbb{R}}
\newcommand{\bbZ}{\mathbb{Z}}
\newcommand{\calA}{\mathcal{A}}
\newcommand{\calL}{\mathcal{L}}
\newcommand{\calT}{\mathcal{T}}
\DeclareMathOperator{\id}{id}
\DeclareMathOperator{\re}{Re}
\newcommand{\sa}{{\operatorname{sa}}}
\DeclareMathOperator{\spb}{s}
\newcommand{\per}{{\operatorname{per}}}
\newcommand{\bnd}{{\operatorname{bnd}}}
\theoremstyle{definition}
\newtheorem{definition}{Definition}[section]
\newtheorem{remark}[definition]{Remark}
\theoremstyle{plain}
\newtheorem{proposition}[definition]{Proposition}
\newtheorem{theorem}[definition]{Theorem}
\numberwithin{equation}{section}
\begin{document}

\title[$C^*$-algebras and Perron--Frobenius theory]{A note on lattice ordered $C^*$-algebras and Perron--Frobenius theory}
\author{Jochen Gl\"uck}
\email{jochen.glueck@uni-ulm.de}
\address{Jochen Gl\"uck, Institute of Applied Analysis, Ulm University, 89069 Ulm, Germany}
\keywords{Lattice ordered $C^*$-algebra; Perron--Frobenius theory; completely positive semigroup; commutativity from order}
\subjclass[2010]{46L05; 46B40; 47B65}
\date{\today}
\begin{abstract}
	A classical result of Sherman says that if the space of self-adjoint elements in a $C^*$-algebra $\mathcal{A}$ is a lattice with respect to its canonical order, then $\mathcal{A}$ is commutative. We give a new proof of this theorem which shows that it is intrinsically connected with the spectral theory of positive operator semigroups. Our methods also show that some important Perron--Frobenius like spectral results fail to hold in any non-commutative $C^*$-algebra.
\end{abstract}
\maketitle

\section{Introduction}

Let us consider the space $\calA_\sa$ of self-adjoint elements of a $C^*$-algebra $\calA$. There is a canonical order on $\calA_\sa$ which is given by $a \le b$ if and only if $b-a$ is positive semi-definite. With respect to this order $\calA_\sa$ is an ordered real Banach space, meaning that the \emph{positive cone} $(\calA_\sa)_+ := \{a \in \calA_\sa| \, a \ge 0\}$ is closed, convex and invariant with respect to multiplication by scalars $\alpha \ge 0$ and that it fulfils $(\calA_\sa)_+ \cap -(\calA_\sa)_+ = \{0\}$. If $\calA$ is commutative, then it follows from Gelfand's representation theorem for commutative $C^*$-algebras that $\calA_\sa$ is actually lattice ordered, i.e.\ that all elements $a,b \in \calA_\sa$ have an \emph{infimum} (or \emph{greatest lower bound}) $a \land b$. In 1951 Sherman proved that the converse implication also holds, i.e.\ if $\calA_\sa$ is lattice ordered, then $\calA$ is commutative \cite[Theorems~1 and~2]{Sherman1951}.

Since then a great wealth of results has appeared which vary and generalise Sherman's theorem in several directions. We only quote a few of them: in \cite[Theorems~1 and~2]{Fukamiya1954} it is shown that if the ordered space $\calA_\sa$ is only assumed to have the Riesz decomposition property, then $\calA$ is commutative. In \cite[Theorem~2]{Archbold1974} it is proved that if for a fixed element $a \in \calA_\sa$ and all $b \in \calA_\sa$ the infimum $a \land b$ exists, then $\calA$ is commutative. In \cite[Corollary~11 and Theorem~12]{Kadison1951}, \cite[Theorem~3.7]{Archbold1972}, \cite[Theorem~3.5]{Cho-Ho1973} and \cite[Corollary~2.4]{Green1977} it is shown that if $\calA$ is in some sense too far from being commutative, then $\calA_\sa$ is even a so-called \emph{anti-lattice}, meaning that two elements of $a,b\in \calA_\sa$ have an infimum only if $a \le b$ or $b \le a$. In \cite{Curtis1958} Sherman's result is adapted to more general Banach algebras. The ``commutativity-from-order'' theme was also considered from a somewhat different viewpoint in \cite{Topping1965} and a more recent contribution to the theory can be found in \cite{Spain2001}.

It might not come as a surprise that the proofs for many of the above mentioned results use methods which are rather typical for the theory of operator algebras. The main goal of this note is to show that Sherman's theorem can also be proved by using only elementary properties of $C^*$-algebras if we instead employ the so-called \emph{Perron--Frobenius theory}; this notion is usually used to refer to the spectral theory of positive operators and semigroups, in particular on Banach lattices. We present our new proof of Sherman's result in Section~\ref{section:a-new-proof-of-shermans-theorem}.

Since our approach relies on non-trivial spectral theoretic results, we do not claim that our proof is simpler than the original one; our main point is that our proof employs completely different methods and thus sheds some new light on Sherman's theorem. In particular, the proof establishes a beautiful connection between commutativity of operator algebras and Perron--Frobenius theory. In Section~\ref{section:perron-frobenius-theory-on-c-star-algebras} we have a somewhat deeper look into this connection by proving that certain Perron--Frobenius type results can never hold on a non-commutative $C^*$-algebra.

\section{A new proof of Sherman's theorem} \label{section:a-new-proof-of-shermans-theorem}

In this section we give a new proof of Sherman's commutativity result by means of Perron--Frobenius spectral theory. For every Banach space $E$ we denote by $\calL(E)$ the space of bounded linear operators on $E$.

\begin{theorem}[Sherman] \label{thm:sherman}
	Let $\calA$ be a $C^*$-algebra and let the space $\calA_\sa$ of self-adjoint elements in $\calA$ be endowed with the canonical order. Then $\calA_\sa$ is lattice ordered if and only if $\calA$ is commutative.
\end{theorem}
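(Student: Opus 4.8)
The plan is to prove the nontrivial direction: if $\calA_\sa$ is lattice ordered, then $\calA$ is commutative. (The converse follows from Gelfand theory, as the author already noted in the introduction.) The strategy advertised in the abstract and introduction is to invoke a theorem from Perron--Frobenius theory, so I would set things up as a statement about positive semigroups acting on the Banach lattice $\calA_\sa$.

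The key steps I would carry out are as follows. First, fix any self-adjoint element $a \in \calA_\sa$ and consider the inner-derivation-type operator, or rather the one-parameter group it generates: the map $T_t \colon x \mapsto e^{ita} x e^{-ita}$ on $\calA$. Since conjugation by the unitary $e^{ita}$ is a $*$-automorphism, each $T_t$ restricts to a bijective, order-preserving linear isometry of $\calA_\sa$; thus $(T_t)_{t\in\bbR}$ is a $C_0$-group of positive operators (a group of lattice isomorphisms, since on a Banach lattice a surjective positive isometry with positive inverse is automatically a lattice homomorphism) fixing the identity $\one \in \calA_\sa$. Second, I would invoke the relevant Perron--Frobenius result: for a bounded $C_0$-group of positive operators on a Banach lattice, the generator has \emph{trivial} peripheral (in fact, all of its) spectrum in a suitable sense --- concretely, a bounded positive $C_0$-semigroup on a Banach lattice whose generator has spectral bound $0$ and which has a strictly positive fixed vector forces the peripheral point spectrum of the generator to be just $\{0\}$; for a \emph{group}, the boundedness on both sides combined with positivity forces the generator $B$ to satisfy $\spec(B) = \{0\}$, and then, the group being bounded, a standard argument (the group is then a bounded representation of $\bbR$ with $\spec(B)=\{0\}$, hence $T_t = e^{tB}$ with $B$ having spectral radius $0$) yields that $B$ is in fact $0$, i.e.\ $T_t = \id$ for all $t$. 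Third, $T_t = \id$ for all $t$ means $e^{ita} x = x e^{ita}$ for all $x \in \calA$ and all $t \in \bbR$; differentiating at $t = 0$ gives $ax = xa$, so every self-adjoint $a$ is central, and since $\calA = \calA_\sa + i\calA_\sa$ this means $\calA$ is commutative.

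The main obstacle, and the place where the ``deep theorem from Perron--Frobenius theory'' enters, is the middle step: showing that a bounded positive $C_0$-group on a Banach lattice with a fixed quasi-interior (or at least non-trivial) positive vector must be \emph{trivial} rather than merely having trivial peripheral spectrum. One has to be careful about what exactly is available: the cleanest route is a rotation/Arendt--Greiner--type result stating that if $(T_t)$ is a bounded positive $C_0$-group on a Banach lattice, then each $T_t$ is a lattice isomorphism and the group generated is, after a renorming, a group of isometric lattice automorphisms, whose generator on a \emph{complex} Banach lattice with $\one$ fixed must have spectrum $\{0\}$; combined with the Katznelson--Tzafriri or Gelfand theorem ($\|T_t\|$ bounded and $\spec(B) \subseteq \{0\}$ imply $T_t = \id$ when $B$ is bounded, but here $B$ need not be bounded, so one needs instead that $\spec(B) = \{0\}$ for a group generator forces $B = 0$, which is precisely the statement $e^{tB}$ and $e^{-tB}$ both bounded $\Rightarrow$ $B$ is the generator of a bounded group with singleton spectrum $\Rightarrow B=0$). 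I would also need to verify the soft facts cleanly: that $\calA_\sa$ with the order-unit norm (equivalent to the $C^*$-norm by the Archimedean property) is a Banach lattice precisely \emph{because} it is assumed lattice ordered --- this is where the hypothesis is used to even make sense of ``positive $C_0$-group on a Banach lattice'' --- and that $t \mapsto T_t$ is strongly continuous, which follows from norm-continuity of $t \mapsto e^{ita}$ in a unital $C^*$-algebra (or a standard approximation argument in the nonunital case after passing to the unitisation). At the end of the section the author promises an elementary substitute for the deep spectral result, so in the write-up I would flag exactly which spectral input is needed and defer its elementary proof.
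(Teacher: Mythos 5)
Your setup (the conjugation group $T_t c = e^{-ita}\,c\,e^{ita}$ on $\calA_\sa$, the renorming of the lattice-ordered space $\calA_\sa$ into a Banach lattice, and a Gelfand-type $T=\id$ theorem at the end) coincides with the paper's, but the spectral input you propose for the middle step is false as stated, and this is a genuine gap. There is no Perron--Frobenius theorem asserting that a bounded positive $C_0$-group on a Banach lattice has generator with spectrum $\{0\}$, nor that a bounded positive $C_0$-semigroup with spectral bound $0$ and a strictly positive fixed vector has peripheral point spectrum $\{0\}$: the rotation group on $C(\bbT)$, $(T_tf)(\theta) = f(\theta+t)$, is a bounded group of isometric lattice automorphisms fixing the strictly positive vector $\one$, and its generator $\frac{d}{d\theta}$ has spectrum and boundary point spectrum equal to $i\bbZ$. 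What the general theory gives for merely bounded positive semigroups is only \emph{additive cyclicity} of the boundary spectrum, not triviality, so the ``rotation/Arendt--Greiner-type result'' you intend to lean on does not exist and your argument cannot conclude $\spec(B)=\{0\}$.

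The observation you explicitly discard --- ``here $B$ need not be bounded'' --- is precisely what rescues the proof and what the paper uses: the generator is the inner derivation $Lc = -iac+ica$, a bounded operator (of norm at most $2\|a\|$), so the group is norm continuous. For bounded, eventually norm continuous positive semigroups on Banach lattices the spectral bound \emph{is} a dominant spectral value, i.e.\ $\spec(L)\cap i\bbR=\{0\}$ (Corollary~C-III-2.13 in Arendt et al.; the paper also gives an elementary substitute in its Proposition~\ref{prop:perron-frobenius-for-bounded-generator}: exponential positivity forces $L+\|L\|\id \ge 0$, the spectral radius of a positive operator lies in its spectrum, and combined with $\spec(L)\subseteq i\bbR$ this yields $\spec(L)=\{0\}$); then Gelfand's theorem (single-operator or semigroup version) gives $T_t=\id_{\calA_\sa}$ and hence $L=0$. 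Without exploiting boundedness of the generator, positivity and boundedness of the group alone do not force triviality, as the rotation example shows. A smaller point: your renorming via the order-unit norm presupposes a unit in $\calA_\sa$; the paper's norm $\|x\|_{\operatorname{BL}}=\sup_{0\le z\le|x|}\|z\|$, which uses normality of the cone and the Riesz decomposition property, also covers the nonunital case (and note that passing to the unitization is not harmless here, since lattice orderedness of the unitization's self-adjoint part is not automatic).
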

\begin{proof}
	The implication ``$\Leftarrow$'' follows readily from Gelfand's representation theorem for commutative $C^*$-algebras. To prove the implication ``$\Rightarrow$'', assume that $\calA_\sa$ is lattice ordered. Fix $a \in \calA_\sa$. It suffices to show that $a$ commutes with every element $c \in \calA_\sa$. 
	
	For every $t \in \bbR$ we define an operator $T_t \in \calL(\calA_\sa)$ by $T_tc = e^{-ita}\,c\,e^{ita}$ (where the exponential function is computed in the unitization of $\calA$ in case that $\calA$ does not contain a unit). Obviously, the operator family $\calT := (T_t)_{t \in \bbR} \subseteq \calL(\calA_\sa)$ is a contractive positive $C_0$-group on $\calA_\sa$. The orbit $t \mapsto T_tc$ is differentiable for every $c \in \calA_\sa$ and its derivative at $t = 0$ is given by $-iac + ica$; hence, the $C_0$-group $\calT$ has a bounded generator $L \in \calL(\calA_\sa)$ which is given by $Lc  = -iac + ica$ for all $c \in \calA_\sa$. 
	
	Let $\sigma(L)$ denote the spectrum of (the complex extension of) $L$; then $\sigma(L)$ is contained in $i\bbR$ since the $C_0$-group $\calT$ is bounded and $\sigma(L)$ is non-empty and bounded since $L \in \calL(\calA_\sa)$. Now we use that $\calA_\sa$ is a lattice to show that $\sigma(L) = \{0\}$: since the norm is monotone on $\calA_\sa$ (meaning that we have $\|a\| \le \|b\|$ whenever $0 \le a \le b$, see \cite[Theorem~2.2.5(3)]{Murphy1990}), we can find an equivalent norm on $\calA_\sa$ which renders it a Banach lattice (see Section~\ref{section:perron-frobenius-theory-on-c-star-algebras} for a description of this norm). However, if an operator $L$ with spectral bound $0$ generates a bounded, eventually norm continuous and positive $C_0$-semigroup on a Banach lattice, then it follows from Perron--Frobenius theory that $0$ is a dominant spectral value of $L$, i.e.\ that we have $\sigma(L) \cap i\bbR = \{0\}$; see \cite[Corollary~C-III-2.13]{Arendt1986}. Hence, $\sigma(L) = \{0\}$. 
	
	Since $L$ generates a bounded $C_0$-group it now follows from the semigroup version of Gelfand's $T = \id$ theorem \cite[Corollary~4.4.12]{Arendt2011} that $\calT$ is trivial, i.e.\ that $T_t$ is the identity operator on $\calA_\sa$ for all $t \in \bbR$. Hence $L = 0$, so $0 = Lc = -iac + ica$ for all $c \in \calA_\sa$. This shows that $a$ commutes with all elements of $\calA_\sa$.
\end{proof}

In the above proof we quoted a result of Perron--Frobenius theory for positive operator semigroups from \cite[Corollary~C-III-2.13]{Arendt1986}, and this result is in turn based on a rather deep theorem in \cite[Theorem~C-III-2.10]{Arendt1986}. However, we only needed a simple special case since the generator of our semigroup is bounded. Let us demonstrate how this special case can be treated without employing the entire machinery of Perron--Frobenius theory:

\begin{proposition} \label{prop:perron-frobenius-for-bounded-generator}
	Let $E$ be a Banach lattice and let $L \in \calL(E)$ be an operator with spectral bound $s(L) = 0$. If $e^{tL}$ is positive for every $t \ge 0$, then $\sigma(L) \cap i \bbR = \{0\}$.
\end{proposition}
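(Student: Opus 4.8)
The plan is to exploit the fact that $L$ is a bounded operator, so that its spectrum lies in a disc of radius $\|L\|$ and every spectral value is isolated-free but accessible via resolvent estimates. The key classical input I would use is the following: for a positive $C_0$-semigroup on a Banach lattice, the spectral bound of the generator is always contained in the spectrum (this is elementary for bounded generators, since one can invoke the characterization of $s(L)$ via the resolvent $R(\lambda,L) = \int_0^\infty e^{-\lambda t} e^{tL}\,dt$ for $\re\lambda > s(L)$, which is a positive operator; a singularity must occur at $\lambda = s(L)$ because otherwise the Laplace transform would extend and the resolvent would stay positive while its norm blows up along the real axis — this is the Pringsheim-type argument). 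So $0 = s(L) \in \sigma(L)$ already. What remains is to rule out any purely imaginary $i\beta \in \sigma(L)$ with $\beta \neq 0$.

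First I would reduce to the case where $\sigma(L)$ meets $i\bbR$ in more than just $\{0\}$ and derive a contradiction. The main tool here is a resolvent domination estimate: for $\re\lambda = \alpha > 0$ and any real $\beta$, positivity of the semigroup gives
\[
  |R(\alpha + i\beta, L)x| \le R(\alpha, L)|x| \qquad (x \in E),
\]
obtained by applying the modulus to the Laplace integral representation and using $|e^{tL}x| \le e^{tL}|x|$. Taking norms (the lattice norm is monotone) yields $\|R(\alpha + i\beta, L)\| \le \|R(\alpha, L)\|$. Now suppose $i\beta \in \sigma(L)$ with $\beta \neq 0$. Since $L$ is bounded, $i\beta$ is an isolated point of the part of $\sigma(L)$ on the line $\re\lambda = 0$ only if... — actually the cleaner route is: because $\|R(\alpha, L)\|$ stays bounded as $\alpha \downarrow 0$ would contradict $0 \in \sigma(L)$, so $\|R(\alpha, L)\| \to \infty$; but the rate at which this happens is controlled. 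The sharper statement I want is that $\|R(\alpha + i\beta, L)\| \to \infty$ as $\alpha \downarrow 0$ forces $i\beta \in \sigma(L)$, and conversely the domination forces the blow-up at $i\beta$ to be no worse than at $0$. The decisive point is a lower bound: if $i\beta$ is in the spectrum, then $\|R(\alpha+i\beta,L)\| \ge 1/\dist(i\beta, \sigma(L)) $... no — rather, one uses that for a point of the spectrum on the boundary of the right half plane $\{\re\lambda > 0\}$, $\limsup_{\alpha \downarrow 0}\|R(\alpha + i\beta, L)\|= \infty$. Combining the domination with the fact that this blow-up must be, roughly, of order comparable to $1/\alpha$ both at $0$ and at $i\beta$, I would extract a contradiction with the spectral mapping / a direct argument that $\sigma(e^L)$ would then contain two points of equal maximal modulus related by rotation, violating that $e^L$ is a positive operator whose spectral radius is a spectral value and, by the Perron–Frobenius theory of positive operators on Banach lattices with bounded spectrum (Krein–Rutman type), is the only one of that modulus unless the spectrum is rotationally symmetric — which the line structure $\sigma(L) \subseteq i\bbR$ forbids unless it collapses.

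Let me state the cleanest version of the final step I would actually carry out: apply the above domination estimate and the characterization of $s(L)$ to the rescaled operator and invoke that, for a bounded positive operator $T = e^{L}$ on a Banach lattice, if $r(T) = 1 \in \sigma(T)$ and $e^{i\theta} \in \sigma(T)$ for some $\theta \notin 2\pi\bbZ$, then one obtains a whole subgroup of the unit circle in $\sigma(T)$ (the cyclic spectral structure), and since $\sigma(L)$ is bounded this subgroup, pulled back through the exponential, would force $\sigma(L) \cap i\bbR$ to be infinite and $0$ not isolated within it — but $\sigma(L)$ is a bounded set and $0$ together with finitely many points... actually cyclicity gives $\sigma(L) \cap i\bbR \supseteq i\beta\bbZ$, which is unbounded, contradicting boundedness of $\sigma(L)$.

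The main obstacle I anticipate is establishing the cyclic structure of the peripheral spectrum (that $e^{i\theta}\sigma_{\mathrm{per}} = \sigma_{\mathrm{per}}$, or at least that $e^{i\theta} \in \sigma_{\mathrm{per}}(T)$ implies $e^{in\theta} \in \sigma_{\mathrm{per}}(T)$ for all $n$) directly from the resolvent domination, without quoting the heavy Perron–Frobenius machinery that the proposition is meant to circumvent. I would handle this via the elementary argument that combines the domination inequality $\|R(\alpha+i\beta,L)x\| \le \|R(\alpha,L)|x|\|$ with a Neumann-series / functional-calculus perturbation: near an isolated peripheral spectral value $i\beta$ of the bounded operator $L$ the spectral projection $P_\beta$ exists, and applying the domination to vectors in its range produces, after a twist by the character $t \mapsto e^{i\beta t}$, a vector in (or dominated by something in) the range of the projection $P_0$ at $0$; iterating the twist yields the points $in\beta$. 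This is the technical heart and where the real work lies, but it is genuinely elementary once the domination estimate is in hand.
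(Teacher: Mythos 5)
Your plan stands or falls with the final step, and that step has a genuine gap. You reduce everything to cyclicity of the peripheral spectrum of $e^{L}$ (equivalently, additive cyclicity of $\sigma(L)\cap i\bbR$), and then argue that $i\beta\in\sigma(L)$, $\beta\neq 0$, would force the unbounded set $i\beta\bbZ$ into $\sigma(L)$. But the cyclicity results you appeal to (Schaefer's theorem for positive operators, or the boundary-spectrum theorem for semigroups) require the operator to be power bounded, resp.\ the semigroup to be bounded --- and the proposition does not assume this, nor does it follow from the hypotheses: for the nilpotent Jordan block $L=\begin{pmatrix}0&1\\0&0\end{pmatrix}$ on $\bbR^2$ one has $e^{tL}\ge 0$ and $s(L)=0$ while $(e^{tL})_{t\ge 0}$ is unbounded. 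Moreover, even where these theorems apply they are exactly the ``heavy Perron--Frobenius machinery'' the proposition is meant to bypass, and your sketched elementary substitute does not close the gap: you assume that $i\beta$ is an isolated point of $\sigma(L)$ so that a spectral projection $P_\beta$ exists (boundary spectrum of a bounded operator need not be discrete --- it can be a continuum), and the ``twist by the character $t\mapsto e^{i\beta t}$'' step, which is where cyclicity would actually be produced, is precisely the technical core of the deep theorem and is left unproved. Indeed, for positive operators that are not power bounded, cyclicity of the peripheral spectrum is an \emph{open problem}, so one should not expect this route to be ``genuinely elementary once the domination estimate is in hand''. The first half of your plan (that $0=s(L)\in\sigma(L)$ via the Laplace representation of the resolvent and a Pringsheim-type argument, and the domination $|R(\alpha+i\beta,L)x|\le R(\alpha,L)|x|$) is fine, but, as you yourself note, the domination inequality alone only bounds the resolvent at $\alpha+i\beta$ by the one at $\alpha$ and yields no contradiction.

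The missing idea is to exploit boundedness of $L$ in a much cruder way: exponential positivity of a \emph{bounded} operator on a Banach lattice already gives quasi-positivity, namely $L+\|L\|\id_E\ge 0$ (this is the elementary half of the cross-positivity characterization, \cite[Theorem~C-II-1.11]{Arendt1986}). Then one only needs the elementary fact that the spectral radius of a positive operator on a Banach lattice lies in its spectrum (a resolvent estimate, \cite[Proposition~V.4.1]{Schaefer1974}): since $s(L+\|L\|)=\|L\|$ and $r(L+\|L\|)\in\sigma(L+\|L\|)$, one gets $r(L+\|L\|)=\|L\|$, and any point $\|L\|+i\beta$ with $\beta\neq 0$ would have modulus strictly larger than $\|L\|$; hence $\sigma(L)\cap i\bbR=\{0\}$. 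No cyclicity, no spectral projections, and no boundedness of the semigroup are needed.
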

\begin{proof}
	Assume that $e^{tL} \ge 0$ for all $t \ge 0$. Since $E$ is a Banach lattice it follows that $L + \|L\| \ge 0$, see \cite[Theorem~C-II-1.11]{Arendt1986}. One can prove by a simple resolvent estimate that the spectral radius of a positive operator on a Banach lattice is contained in the spectrum, see e.g.~\cite[Proposition~V.4.1]{Schaefer1974}. Hence $r(L + \|L\|) \in \sigma(L + \|L\|)$, so we conclude that the spectral bound $\spb(L+\|L\|) = \|L\|$ coincides with the spectral radius $r(L + \|L\|)$. Thus, $\sigma(L + \|L\|) \cap (\|L\| + i\bbR) = \{\|L\|\}$, which proves the assertion.
\end{proof}

\begin{remark} \label{rem:gelfands_t-id-theorem}
	In our proof of Theorem~\ref{thm:sherman} we used another non-trivial result, namely the semigroup version of Gelfand's $T = \id$ theorem. Let us note that we can instead use Gelfand's $T = \id$ theorem for single operators: if we know that $\sigma(L) = \{0\}$ and that the group $(e^{tL})_{t \in \bbR}$ is bounded, then it follows from the spectral mapping theorem for the holomorphic functional calculus (or for eventually norm continuous semigroups) that $\sigma(e^{tL}) = \{1\}$ for every $t \in \bbR$; since every $e^{tL}$ is doubly power bounded, we conclude from Gelfand's $T = \id$ theorem for single operators (see e.g.~\cite[Theorem~B.17]{Engel2000}) that $e^{tL} = \id_{\calA_\sa}$ for all $t \in \bbR$.
\end{remark}

Despite what was said in Proposition~\ref{prop:perron-frobenius-for-bounded-generator} and Remark~\ref{rem:gelfands_t-id-theorem}, our proof of Sherman's theorem still relies on Gelfand's $T = \id$ theorem for single operators, which is a non-trivial result. Thus, our proof is not elementary; yet, all its non-elementary ingredients are essentially independent of $C^*$-algebra theory.

\section{Perron--Frobenius theory on $C^*$-algebras} \label{section:perron-frobenius-theory-on-c-star-algebras}

Our proof of Theorem~\ref{thm:sherman} suggests that certain Perron--Frobenius type spectral results can only be true on commutative $C^*$-algebras. Let us discuss this in a bit more detail: indeed, it was demonstrated in \cite[pp.\,387--388]{Arendt1986} and \cite[Section~4]{Luczak2010} by means of concrete examples that typical Perron--Frobenius results which are true on Banach lattices fail in general on non-commutative $C^*$-algebras. On the other hand, some results can even by shown in the non-commutative setting if one imposes additional assumptions (such as irreducibility and complete positivity) on the semigroup or the operator under consideration. 

For single matrices and operators, such results can for instance be found in \cite{Evans1978, Groh1981, Groh1982, Groh1983}. For time-continuous operator semigroups we refer for example to \cite[Section~D-III]{Arendt1986}; the papers \cite{Albeverio1978, Luczak2010} contain results for both the single operator and the time-continuous case, and in \cite[Section~6.1]{Batkai2012} Perron--Frobenius type results on $W^*$-algebras are proved by a different approach, using a version of the so-called Jacobs--DeLeeuw--Glicksberg decomposition. An example for an application of Perron--Frobenius theory on $C^*$-algebras to the analysis of quantum systems can be found in \cite[Theorem~2.2]{Jaksic2014}.

In this section we are headed in a different direction: we use the idea of our proof of Theorem~\ref{thm:sherman} to show that certain Perron--Frobenius type results are \emph{never} true on non-commutative $C^*$-algebras. 

To state our result we need a bit of notation. Some of the following notions have already been used tacitly above, but to avoid any ambiguity in the formulation of the next theorem, it is important to recall them explicitly here. If $E$ is a Banach space, then we denote by $\calL(E)$ the space of all bounded linear operators on $E$; the dual space of $E$ is denoted by $E'$ and the adjoint of an operator $L \in \calL(E)$ is denoted by $L' \in \calL(E')$. 

By an \emph{ordered Banach space} we mean a tuple $(E,E_+)$, often only denoted by $E$, where $E$ is a real Banach space, and $E_+ \subset E$ is a closed and pointed cone in $E$, meaning that $E_+$ is closed, that $\alpha E_+ + \beta E_+ \subseteq E_+$ for all $\alpha,\beta \in [0,\infty)$ and $E_+ \cap -E_+ = \{0\}$. On an ordered Banach space there is a canonical order relation $\le$ which is given by ``$x \le y$ iff $y - x \in E_+$''. The cone $E_+$ is called \emph{generating} if $E = E_+ - E_+$ and it is called \emph{normal} if there exists a constant $c > 0$ such that $\|x\| \le c\|y\|$ whenever $0 \le x \le y$. 

If $\calA$ is a $C^*$-algebra, then the space $\calA_\sa$ of self-adjoint elements in $\calA$ is usually endowed with the cone $(\calA_\sa)_+ := \{a \in \calA_\sa: \; \sigma(a) \subseteq [0,\infty)\}$; thus one obtains the canonical order on $\calA_\sa$ that we already considered in Theorem~\ref{thm:sherman}. Note that the cone $(\calA_\sa)_+$ is normal (since $\|a\| \le \|b\|$ whenever $0 \le a \le b$, see \cite[Theorem~2.2.5(3)]{Murphy1990}) and generating \cite[p.\,45]{Murphy1990}.

Let $E$ be an ordered Banach space. For all $x,y \in E$ the \emph{order interval} $[x,y]$ is given by $[x,y] := \{f \in E: \, x \le f \le y\}$. The space $E$ is said to have the \emph{Riesz decomposition property} if $[0,x] + [0,y] = [0,x+y]$ for every $x,y \in E_+$. If $E$ is lattice ordered, then the cone $E_+$ is generating and $E$ has the Riesz decomposition property \cite[Corollary~1.55]{Aliprantis2007}, but the converse is not in general true. 

If $E$ is an ordered Banach space with generating and normal cone and if the induced order makes $E$ a vector lattice, then $\|x\|_{\operatorname{BL}} := \sup_{0 \le z \le |x|} \|z\|$ defines an equivalent norm on $E$ which renders it a Banach lattice (use the  Riesz decomposition property of $E$ to see that $\|\cdot\|_{\operatorname{BL}}$ satisfies the triangle inequality and use e.g.\ \cite[Theorem~2.37(3)]{Aliprantis2007} to see that the norm $\|\cdot\|_{\operatorname{BL}}$ is indeed equivalent to the original norm).

Let $E$ be an ordered Banach space. An operator $L \in \calL(E)$ is called \emph{positive} if $LE_+ \subseteq E_+$; this is denoted by $L \ge 0$. By $E'_+$ we denote the set of all positive functionals on $E$; here, a functional $x' \in E'$ is called \emph{positive} if $\langle x', x\rangle \ge 0$ for all $x \in E_+$. It follows from the Hahn--Banach separation theorem that a vector $x \in E$ is contained in $E_+$ if and only if $\langle x', x\rangle \ge 0$ for all $x' \in E'_+$; hence, an operator $L \in \calL(E)$ is positive if and only if $L'E'_+ \subseteq E'_+$. If the positive cone in $E$ is generating, then $E'_+ \cap - E'_+ = \{0\}$ and thus, the dual space $E'$ is also an ordered Banach space.

Let $E$ be an ordered Banach space. We call an operator $L \in \calL(E)$ \emph{quasi-positive} if $L + \alpha \id_E \ge 0$ for some $\alpha \ge 0$, we call it \emph{exponentially positive} if $e^{tL} \ge 0$ for all $t \in [0,\infty)$ and and we call it \emph{cross positive} if $\langle x', Lx \rangle \ge 0$ for all $x \in E_+$ and all $x' \in E'_+$ which fulfil $\langle x', x \rangle = 0$. We point out, however, that those properties are sometimes named differently in the literature.

If $A: E \supseteq D(A) \to E$ is a closed linear operator on a real Banach space $E$, then $\sigma(A)$ denotes the spectrum of the complex extension of $A$ to some complexification of $E$; similarly, all other notions from spectral theory are understood to be properties of the complex extension of $A$. The \emph{spectral bound} of $A$ is the number $s(A) := \sup\{\re \lambda: \, \lambda \in \sigma(A)\} \in [-\infty,\infty]$ and if $s(A) \in \bbR$, then the \emph{boundary spectrum} if $A$ is defined to be the set $\sigma_{\bnd}(A) := \{\lambda \in \sigma(A): \; \re \lambda = s(A)\}$. If $L \in \calL(E)$, then the \emph{spectral radius} if $L$ is denoted by $r(L)$ and the \emph{peripheral spectrum} of $L$ is defined to be the set $\sigma_\per(L) := \{\lambda \in \sigma(L): \; |\lambda| = r(L)\}$.

Many results in Perron--Frobenius theory ensure the \emph{cyclicity} of parts of the spectrum; here, a set $S \subseteq \bbC$ is called \emph{cyclic} if $re^{i\theta} \in S$ ($r \ge 0$, $\theta \in \bbR$) implies that $re^{in\theta} \in S$ for all integers $n \in \bbZ$. The set $S$ is called \emph{additively cyclic} if $\alpha + i\beta \in S$ ($\alpha, \beta \in \bbR$) implies that $\alpha + in\beta \in S$ for all integers $n \in \bbZ$. 

An important result in Perron--Frobenius theory states that on any given Banach lattice $E$, every positive power bounded operator with spectral radius $1$ has cyclic peripheral spectrum; in fact, an even somewhat stronger result is true, see \cite[Theorem~V.4.9]{Schaefer1974}. However, it is still an open problem whether \emph{every} positive operator on a Banach lattice has cyclic peripheral spectrum; see \cite{Glueck2015, Glueck2016} for a detailed discussion of this and for some recent progress in this question. An analogues result for $C_0$-semigroups says that if the generator of a positive and bounded $C_0$-semigroup has spectral bound $0$, then its boundary spectrum is additively cyclic; see \cite[Theorem~C-III-2.10]{Arendt1986} for a slightly stronger result. 

Theorem~\ref{thm:perron-frobenius-on-c-star-algebras} below shows that the above mentioned results are \emph{never} true an a non-commutative $C^*$-algebra. In fact, they are not even true for \emph{completely positive} operators and semigroups on such spaces; recall that a linear operator $L \in \calL(\calA_\sa)$, whose complex extension to $\calA$ is denoted by $L_\bbC$, is called \emph{completely positive} if the operator $L_\bbC \otimes \id_{\bbC^{k \times k}} \in \calL(\calA \otimes \bbC^{k \times k})$ (more precisely: its restriction to the self-adjoint part of the $C^*$-algebra $\calA \otimes \bbC^{k \times k}$) is positive for every $k \in \bbN_0$; see \cite[Section~II.6.9]{Blackadar2006} for some details. We call a $C_0$-semigroup $(e^{tA})_{t \ge 0}$ on $\calA_\sa$ \emph{completely positive} if the operator $e^{tA}$ is completely positive for every $t \ge 0$. Completely positive operators play an important role in quantum physics. Here we are going to show that the above mentioned Perron--Frobenius type results are not even true for completely positive operators and semigroups if the $C^*$-algebra $\calA$ is non-commutative. To show this we only need the simple fact that for every $a \in \calA$ the operator $L \in \calL(\calA_\sa)$ given by $Lc = a^*ca$ is completely positive. 

\begin{theorem} \label{thm:perron-frobenius-on-c-star-algebras}
	Let $\calA$ be a $C^*$-algebra and let the space $\calA_\sa$ of self-adjoint elements in $\calA$ be endowed with the canonical order. The following assertions are equivalent:
	\begin{enumerate}[\upshape (i)]
		\item $\calA$ is commutative.
		\item $\calA_\sa$ is lattice ordered.
		\item $\calA_\sa$ has the Riesz decomposition property. 
		\item Every cross positive operator in $\calL(\calA_\sa)$ is quasi-positive. 
		\item Every exponentially positive operator in $\calL(\calA_\sa)$ is quasi-positive.
		\item Every power bounded positive operator in $\calL(\calA_\sa)$ with spectral radius $1$ has cyclic peripheral spectrum.
		\item Every power bounded completely positive operator in $\calL(\calA_\sa)$ with spectral radius $1$ has cyclic peripheral spectrum.
		\item If the generator $A$ of a bounded positive $C_0$-semigroup on $\calA_\sa$ has spectral bound $0$, then its boundary spectrum is additively cyclic. 
		\item If the generator $A$ of a bounded completely positive $C_0$-semigroup on $\calA_\sa$ has spectral bound $0$, then its boundary spectrum is additively cyclic. 
		\item Every norm continuous, bounded and completely positive $C_0$-group $(e^{tL})_{t \in \bbR}$ on $\calA_\sa$ is trivial, i.e.\ it fulfils $e^{tL} = \id_{\calA_\sa}$ for all $t \in \bbR$.
	\end{enumerate}
\end{theorem}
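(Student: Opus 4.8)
The plan is to prove all equivalences by closing five cycles, each passing through assertion~(i): namely $(i)\Rightarrow(ii)\Rightarrow(iii)\Rightarrow(i)$, $(i)\Rightarrow(iv)\Rightarrow(v)\Rightarrow(i)$, $(i)\Rightarrow(vi)\Rightarrow(vii)\Rightarrow(i)$, $(i)\Rightarrow(viii)\Rightarrow(ix)\Rightarrow(i)$, and $(i)\Rightarrow(x)\Rightarrow(i)$. Many of these arrows need no work. The implication $(i)\Rightarrow(ii)$ is Gelfand's theorem (as in the proof of Theorem~\ref{thm:sherman}); $(ii)\Rightarrow(iii)$ holds because a vector lattice has the Riesz decomposition property \cite[Corollary~1.55]{Aliprantis2007}; and $(iii)\Rightarrow(i)$ is Fukamiya's strengthening of Sherman's theorem \cite[Theorems~1 and~2]{Fukamiya1954}. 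Moreover, on any ordered Banach space one has the chain ``quasi-positive $\Rightarrow$ exponentially positive $\Rightarrow$ cross positive'' (for the last step differentiate $t\mapsto\langle x',e^{tL}x\rangle$ at $t=0$ for admissible $x\in E_+$, $x'\in E'_+$ with $\langle x',x\rangle=0$), so the class of quasi-positive operators lies inside the class of cross positive ones and $(iv)\Rightarrow(v)$ is immediate; likewise every completely positive operator (resp.\ semigroup) is positive, so $(vi)\Rightarrow(vii)$ and $(viii)\Rightarrow(ix)$ are immediate.

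For the forward implications out of~(i) one uses that a commutative $C^*$-algebra $\calA$ is isomorphic to some $C_0(X)$, so that $\calA_\sa\cong C_0(X,\bbR)$ is a Banach lattice. Then $(i)\Rightarrow(x)$ is precisely the argument of Theorem~\ref{thm:sherman}: a norm continuous, bounded, completely positive $C_0$-group on $\calA_\sa$ has a bounded generator $L$ with $\sigma(L)\subseteq i\bbR$, whence $s(L)=0$ since $\sigma(L)\neq\emptyset$; Proposition~\ref{prop:perron-frobenius-for-bounded-generator} then gives $\sigma(L)\cap i\bbR=\{0\}$, so $\sigma(L)=\{0\}$, and Gelfand's $T=\id$ theorem yields $L=0$. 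Next, $(i)\Rightarrow(vi)$ is the cyclicity theorem for positive power bounded operators on Banach lattices \cite[Theorem~V.4.9]{Schaefer1974}, and $(i)\Rightarrow(viii)$ is its $C_0$-semigroup counterpart \cite[Theorem~C-III-2.10]{Arendt1986}. Finally, $(i)\Rightarrow(iv)$ can be checked directly on $C_0(X,\bbR)$: if $L\in\calL(\calA_\sa)$ is cross positive and $s\in X$, then applying cross positivity to the functional $\delta_s\ge 0$ and to non-negative test functions that vanish at $s$ shows that the measure $L'\delta_s\in C_0(X)'$ is non-negative on $X\setminus\{s\}$; since its point mass at $s$ has absolute value at most $\|L\|$, one gets $(Lx)(s)\ge -\|L\|\,x(s)$ for all $x\in(\calA_\sa)_+$ and all $s\in X$, i.e.\ $L+\|L\|\,\id\ge 0$.

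The heart of the theorem consists of the four backward implications $(v)\Rightarrow(i)$, $(vii)\Rightarrow(i)$, $(ix)\Rightarrow(i)$, $(x)\Rightarrow(i)$, all of which I would prove by contraposition from a single construction. Assume $\calA$ is not commutative; since $\calA_\sa$ spans $\calA$ there is a non-central $a\in\calA_\sa$, and we form the norm continuous, isometric, completely positive $C_0$-group $T_tc=e^{-ita}ce^{ita}$ on $\calA_\sa$ with bounded generator $L$, $Lc=i(ca-ac)$, as in the proof of Theorem~\ref{thm:sherman}; note that $T_t$ and each ampliation $T_t\otimes\id_{\bbC^{k\times k}}$ is a conjugation by a unitary and hence completely positive. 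The group is bounded, so $\sigma(L)\subseteq i\bbR$; since $La=0$ with $a\neq 0$ we have $0\in\sigma(L)$; and $\sigma(L)\neq\{0\}$, for otherwise Gelfand's $T=\id$ theorem would make $T$ trivial and $a$ central. Thus $\sigma(L)=iK$ with $K\subseteq\bbR$ compact, $0\in K$, $K\neq\{0\}$, and $s(L)=0$ with $\sigma_{\bnd}(L)=\sigma(L)=iK$. This already refutes~(ix) (a bounded subset of $i\bbR$ containing a nonzero point is not additively cyclic) and~(x) (the group is nontrivial). For~(v), the operator $L$ is exponentially positive but not quasi-positive: if $L+\alpha\,\id\ge 0$ for some $\alpha\ge 0$ then, the cone $(\calA_\sa)_+$ being normal, the resolvent estimate used in the proof of Proposition~\ref{prop:perron-frobenius-for-bounded-generator} (which does not require a lattice structure) shows that $\spr(L+\alpha\,\id)\in\sigma(L+\alpha\,\id)=\alpha+iK$; being a non-negative real number in $\alpha+iK$ it equals $\alpha$, which forces $\max_{\gamma\in K}|\gamma|=0$ and contradicts $K\neq\{0\}$. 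For~(vii), the spectral mapping theorem gives $\sigma(T_t)=e^{t\sigma(L)}=e^{itK}\subseteq\bbT$, so $\sigma_{\per}(T_t)=\sigma(T_t)=e^{itK}$; choosing $R$ with $K\subseteq(-R,R)$ and then $t\in(0,\pi/R)$ such that $t\gamma_0/(2\pi)$ is irrational for some fixed $\gamma_0\in K\setminus\{0\}$ (possible since the irrationals are dense), cyclicity of $e^{itK}$ would force it to contain the dense set $\{e^{int\gamma_0}:n\in\bbZ\}$ and hence, being closed, to equal $\bbT$, contradicting that it lies in a proper arc of $\bbT$. Since each of the operators $T_t$ and the semigroup $(T_t)_{t\ge 0}$ is completely positive, these examples refute~(vii) and~(ix) in their completely positive forms, and a fortiori in their plain positive forms, which closes all five cycles. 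The steps I expect to require the most care are the verification that the constructed examples genuinely violate the cyclicity conditions --- in particular the choice of the parameter $t$ in the refutation of~(vii) --- and the bookkeeping of which facts are available on a general ordered Banach space and which need the commutative (Banach lattice) structure; the remainder is an assembly of the results quoted above.
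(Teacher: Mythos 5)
Your proposal is correct, but it is organised quite differently from the paper's proof. The paper uses a single ``funnel'': it proves (ii)$\Rightarrow$(iv),(vi),(viii) and (ii)$\Rightarrow$(iii)$\Rightarrow$(v) using the equivalent Banach lattice norm, then shows (v),(vii),(ix)$\Rightarrow$(x) for \emph{arbitrary} bounded completely positive groups (each hypothesis forces $\sigma(L)=\{0\}$, and Gelfand's $T=\id$ theorem gives triviality), and only in the last step (x)$\Rightarrow$(i) does the conjugation group $T_tc=e^{-ita}ce^{ita}$ appear. You instead close five cycles through (i) and prove (v),(vii),(ix),(x)$\Rightarrow$(i) by contraposition, building the conjugation group from a non-central $a\in\calA_\sa$ and checking directly that $\sigma(L)=iK$ with $K$ compact, $0\in K\neq\{0\}$, violates each spectral assertion; your verifications (non-quasi-positivity via $r(L+\alpha\id)\in\alpha+iK$, the irrational-rotation argument showing $e^{itK}$ is not cyclic for suitable $t$, and boundedness of $K$ versus additive cyclicity) are all sound, and they make the failure in the noncommutative case pleasantly explicit, at the price of redoing inside the example what the paper does once abstractly. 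Two remarks on ingredients: for $r(L+\alpha\id)\in\sigma(L+\alpha\id)$ you should cite the ordered-Banach-space version for generating \emph{and} normal cones (as the paper does, \cite[Section~2.2 in the Appendix]{Schaefer1999}) rather than assert that the Banach-lattice reference behind Proposition~\ref{prop:perron-frobenius-for-bounded-generator} extends; and your (iii)$\Rightarrow$(i) simply imports Fukamiya's theorem \cite{Fukamiya1954}, which is logically legitimate but leans on exactly the operator-algebraic literature the paper is trying to bypass --- the paper instead keeps (iii) inside the Perron--Frobenius framework by a duality argument ((iii)$\Rightarrow$(v): the dual of $\calA_\sa$ is a vector lattice with normal cone, hence a Banach lattice up to renorming, and exponential positivity of $L'$ then yields quasi-positivity), which you could substitute if a self-contained proof is desired. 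Your direct measure-theoretic check of (i)$\Rightarrow$(iv) on $C_0(X,\bbR)$ is fine (the paper just quotes \cite[Theorem~C-II-1.11]{Arendt1986}), needing only the standard regularity argument to see that $L'\delta_s$ is positive off $\{s\}$.
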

\begin{proof}
	We show ``(i) $\Rightarrow$ (ii) $\Rightarrow$ (iv) $\Rightarrow$ (v) $\Rightarrow$ (x) $\Rightarrow$ (i)'' as well as ``(ii) $\Rightarrow$ (iii) $\Rightarrow$ (v)'', ``(ii) $\Rightarrow$ (vi) $\Rightarrow$ (vii) $\Rightarrow$ (x)'' and ``(ii) $\Rightarrow$ (viii) $\Rightarrow$ (ix) $\Rightarrow$ (x)''.
	
	``(i) $\Rightarrow$ (ii)'' This follows from Gelfand's representation theorem for commutative $C^*$-algebras.
	
	``(ii) $\Rightarrow$ (iv)''. Since $\calA_\sa$ is lattice ordered and since the positive cone in $\calA_\sa$ is normal, there is an equivalent norm on $\calA_\sa$ which renders it a Banach lattice. Hence, it follows from \cite[Theorem~C-II-1.11]{Arendt1986} that every cross positive operator on $\calA_\sa$ is quasi-positive.
	
	``(iv) $\Rightarrow$ (v)'' Assume that (iv) is true and let $L \in \calL(\calA_\sa)$ be exponentially positive. Due to (iv) it suffices to show that $L$ is cross positive, so let $0 \le a \in \calA_\sa$ and $0 \le \varphi \in \calA_\sa'$ such that $\langle \varphi, a \rangle = 0$. Then we obtain
	\begin{align*}
		\langle \varphi, La \rangle = \lim_{t \downarrow 0} \frac{\langle \varphi, (e^{tL} - \id_{\calA_\sa})a \rangle}{t} = \lim_{t \downarrow 0} \frac{\langle \varphi, e^{tL}a \rangle}{t} \ge 0.
	\end{align*}
	Hence, $L$ is cross positive and it now follows from (iv) that $L$ is quasi-positive. 
	
	``(v) $\Rightarrow$ (x)'' Suppose that (v) is true, let $L \in \calL(\calA_\sa)$ and assume that the group $(e^{tL})_{t \in \bbR}$ is bounded and completely positive. We argue as in Remark~\ref{prop:perron-frobenius-for-bounded-generator}: we have $\emptyset \not= \sigma(L) \subseteq i\bbR$ and we know from (v) that $L + \alpha \id_{\calA_\sa}$ is positive for some $\alpha \ge 0$. Since the positive cone in $\calA_\sa$ is generating and normal, it follows that the spectral radius of the operator $L + \alpha \id_{\calA_\sa}$ is contained in its spectrum \cite[Section~2.2 in the Appendix]{Schaefer1999}; hence, the spectral radius equals the spectral bound $s(L + \alpha \id_{\calA_\sa})$, which is in turn equal to $\alpha$. Thus, $\sigma(L + \alpha \id_{\calA_\sa}) = \sigma(L + \alpha \id_{\calA_\sa}) \cap (i\bbR + \alpha) = \{\alpha\}$, so we conclude that $\sigma(L) = \{0\}$. It now follows from the semigroup analogue of Gelfand's $T = \id$ theorem \cite[Corollary~4.4.12]{Arendt2011} that $e^{tL} = \id_{\calA_\sa}$ for all $t \in \bbR$.
	
	``(x) $\Rightarrow$ (i)'' Let us argue as in the proof of Theorem~\ref{thm:sherman}: fix $a \in \calA_\sa$ and define $T_t \in \calL(\calA_\sa)$ by $T_tc = e^{-ita}ce^{ita}$ for every $t \in \bbR$ (where the exponential function is computed in the unitization of $\calA$ if $\calA$ does not contain a unit itself). Then $(T_t)_{t \in \bbR}$ is a bounded, completely positive and operator norm continuous group on $\calA_\sa$; its generator $L \in \calL(\calA_\sa)$ is given by $Lc = -iac + ica$ for all $c \in \calA_\sa$. It follows from (x) that $L = 0$ and hence, $a$ commutes with every $c \in \calA_\sa$, which in turn proves that $\calA$ is commutative.
	
	``(ii) $\Rightarrow$ (iii)'' Every vector lattice has the Riesz decomposition property, see e.g.\ \cite[Corollary~1.55]{Aliprantis2007}.
	
	``(iii) $\Rightarrow$ (v)'' Suppose that $\calA_\sa$ has the Riesz decomposition property. Since the cone in $\calA_\sa$ is normal, it follows that the dual space $\calA_\sa'$ is a vector lattice \cite[Theorem~2.47]{Aliprantis2007}. Moreover, the cone in $\calA_\sa'$ is normal, too \cite[Theorem~2.42]{Aliprantis2007}, so $\calA_\sa'$ is a Banach lattice with respect to an equivalent norm. If $L \in \calL(\calA_\sa)$ is exponentially positive, then so is its adjoint $L' \in \calL(\calA_\sa')$ and hence, $L'$ is quasi-positive according to \cite[Theorem~C-II-1.11]{Arendt1986}. This in turn implies that $L$ is quasi-positive itself.
	
	``(ii) $\Rightarrow$ (vi)'' If $\calA_\sa$ is lattice ordered, then it is a Banach lattice with respect to some equivalent norm (since the cone in $\calA_\sa$ is normal). Hence, assertion (vi) follows from Perron--Frobenius theory of single operators on Banach lattices, see \cite[Theorem~V.4.9]{Schaefer1974}.
	
	``(vi) $\Rightarrow$ (vii)'' This is obvious.
	
	``(vii) $\Rightarrow$ (x)'' Suppose that (vii) is true and that $L \in \calL(\calA_\sa)$ generates a completely positive and bounded group $(e^{tL})_{t \in \bbR}$. Then the spectrum of $L$ is bounded and contained in the imaginary axis. Applying the spectral mapping theorem for small $t>0$ we conclude that the peripheral spectrum of $e^{tL}$ can only be cyclic if it is contained in $\{1\}$. Hence, $\sigma(L) = \{0\}$, so assertion (x) follows from the semigroup analogue of Gelfand's $T = \id$ theorem \cite[Corollary~4.4.12]{Arendt2011}.
	
	``(ii) $\Rightarrow$ (viii)'' Assume that $\calA_\sa$ is lattice ordered. We use again that $\calA_\sa$ is then a Banach lattice for some equivalent norm. Hence, assertion (viii) follows from Perron--Frobenius theory for positive $C_0$-semigroups on Banach lattices, see \cite[Theorem~C-III-2.10]{Arendt1986}. 
	
	``(viii) $\Rightarrow$ (ix)'' This is obvious.
	
	``(ix) $\Rightarrow$ (x)'' If (ix) holds and if $L \in \calL(\calA_\sa)$ generates a completely positive and bounded group $(e^{tL})_{t \in \bbR}$, then it follows from (ix) that $\sigma(L) = \{0\}$. Hence we can again employ the semigroup analogue of Gelfand's $T = \id$ theorem \cite[Corollary~4.4.12]{Arendt2011} to conclude that (x) holds.
\end{proof}

The above proof shows that many (not all) of the implications in Theorem~\ref{thm:perron-frobenius-on-c-star-algebras} are also true on general ordered Banach spaces with generating and normal cone. However, since we are mainly interested in $C^*$-algebras in this note, we omit a detailed discussion of this.

\subsection*{Acknowledgement}

During his work on this article, the author was supported by a scholarship of the ``Lan\-des\-gra\-duier\-ten\-f\"or\-der\-ung Baden-W\"urttemberg'' (grant number 1301 LGFG-E).

%
\bibliographystyle{plain}
\bibliography{literature}

\end{document}